\newtheorem{thm}{Theorem}       \newtheorem{propo}{Proposition}
\newtheorem{lemma}{Lemma}       \newtheorem{coro}{Corollary}
\let\paragraph\subsection
\title{A parametrized Poincar\'e Hopf Theorem and Clique Cardinalities of graphs}
\author{Oliver Knill} \date{6/15/2019}
\address{Department of Mathematics \\ Harvard University \\ Cambridge, MA, 02138 }
\subjclass{ 05C10, 57M15 }
\begin{document}

\begin{abstract}
Given a locally injective real function $g$ on the vertex set $V$ 
of a finite simple graph $G=(V,E)$, we prove the Poincar\'e-Hopf formula
$f_G(t) = 1+t \sum_{x \in V} f_{S_g(x)}(t)$, where 
$S_g(x) = \{ y \in S(x), g(y)<g(x) \}$ and 
$f_G(t)=1+f_0 t + \dots + f_{d} t^{d+1}$ is the $f$-function encoding 
the $f$-vector of a graph $G$, where $f_k$ counts the number of 
$k$-dimensional cliques, complete sub-graphs, in $G$. 
The corresponding computation of $f$ reduces the problem recursively 
to $n$ tasks of graphs of half the size. 
For $t=-1$, the parametric Poincar\'e-Hopf formula reduces to the classical 
Poincar\'e-Hopf result \cite{poincarehopf} $\chi(G)=\sum_x i_g(x)$, 
with integer indices $i_g(x)=1-\chi(S_g(x))$ and Euler characteristic $\chi$. 
In the new Poincar\'e-Hopf formula, the indices are integer polynomials 
and the curvatures $K_x(t)$ expressed as index expectations 
$K_x(t) = {\rm E}[i_x(t)]$ are polynomials over $\mathbb{Q}$. 
Integrating the Poincar\'e-Hopf formula over probability spaces of 
functions $g$ gives Gauss-Bonnet formulas like 
$f_G(t) = 1+\sum_{x} F_{S(x)}(t)$, where $F_G(t)$ is the anti-derivative of 
$f$ \cite{cherngaussbonnet,dehnsommervillegaussbonnet}.
A similar computation holds for the generating function 
$f_{G,H}(t,s) = \sum_{k,l} f_{k,l}(G,H) s^k t^l$
of the $f$-intersection matrix $f_{k,l}(G,H)$ counting the number 
of intersections of $k$-simplices in $G$ with $l$-simplices in $H$.
Also here, the computation is reduced to $4 n^2$ computations for 
graphs of half the size:
$f_{G,H}(t,s) = \sum_{v,w} f_{B_g(v),B_g(w)}(t,s) - f_{B_g(v),S_g(w)}(t,s) 
- f_{S_g(v),B_g(w)}(t,s) + f_{S_g(v),S_g(w)}(t,s)$, where 
$B_g(v)= S_g(v)+\{v\}$ is the unit ball of $v$. 
\end{abstract} 
\maketitle

\section{Introduction}

\paragraph{}
Given a {\bf finite simple graph} $G=(V,E)$ with {\bf vertex set} $V$ and {\bf edge set} $E$, 
the complete sub-graphs of $G$ are the simplices of a finite abstract simplicial complex $G$, 
the {\bf Whitney complex} of $G$. In graph theory, the complete sub-graphs are also known as 
{\bf cliques} of $G$ and the Whitney complex is also known as the {\bf clique complex}. 
If $f_k$ is the number of $k$-dimensional cliques in $G$, then $(f_0, \dots, f_d)$ is called the 
{\bf $f$-vector} of $G$ and the integer $d$ is the {\bf maximal dimension} of the graph $G$. 

\paragraph{}
How fast can we compute $f_G$? The problem is difficult because
a computation of $f$ also reveals the size of the maximal clique in $G$, which is 
known to be a {\bf NP hard problem} \cite{DuKo}. %(Theorem 2.15)
The fastest known algorithms to compute $f$ are exponential in $n$. 
We give here a formula which is sub-exponential in typical cases. 
It uses the fact that for a random function $g$, we expect half of the vertices
in $S(x)$ to be in $S_g(x)$, so that in each dimension reduction step, we have
$n$ tasks of graphs which are expected to have half the size. 

\begin{figure}[!htpb]
\scalebox{0.4}{\includegraphics{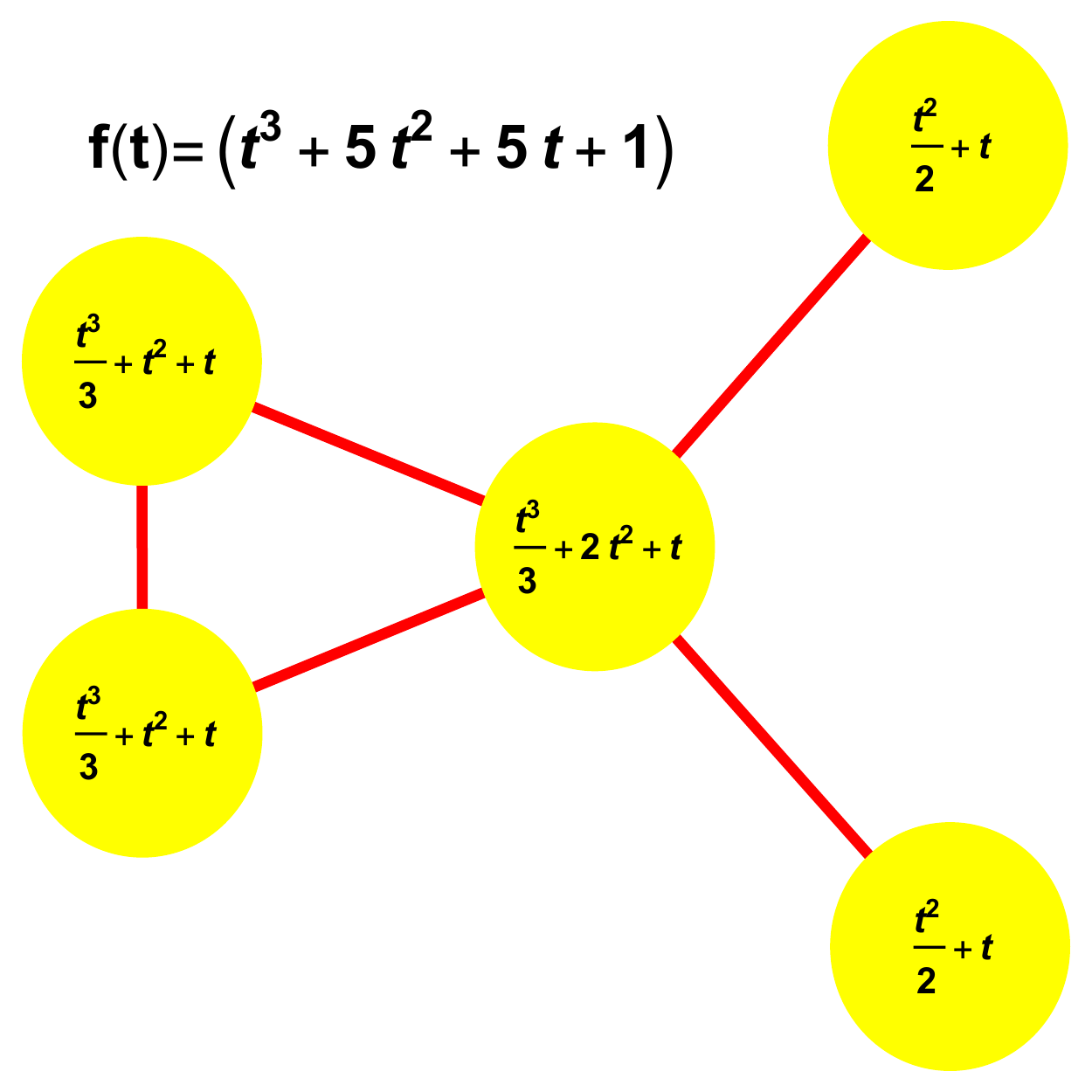}}
\scalebox{0.4}{\includegraphics{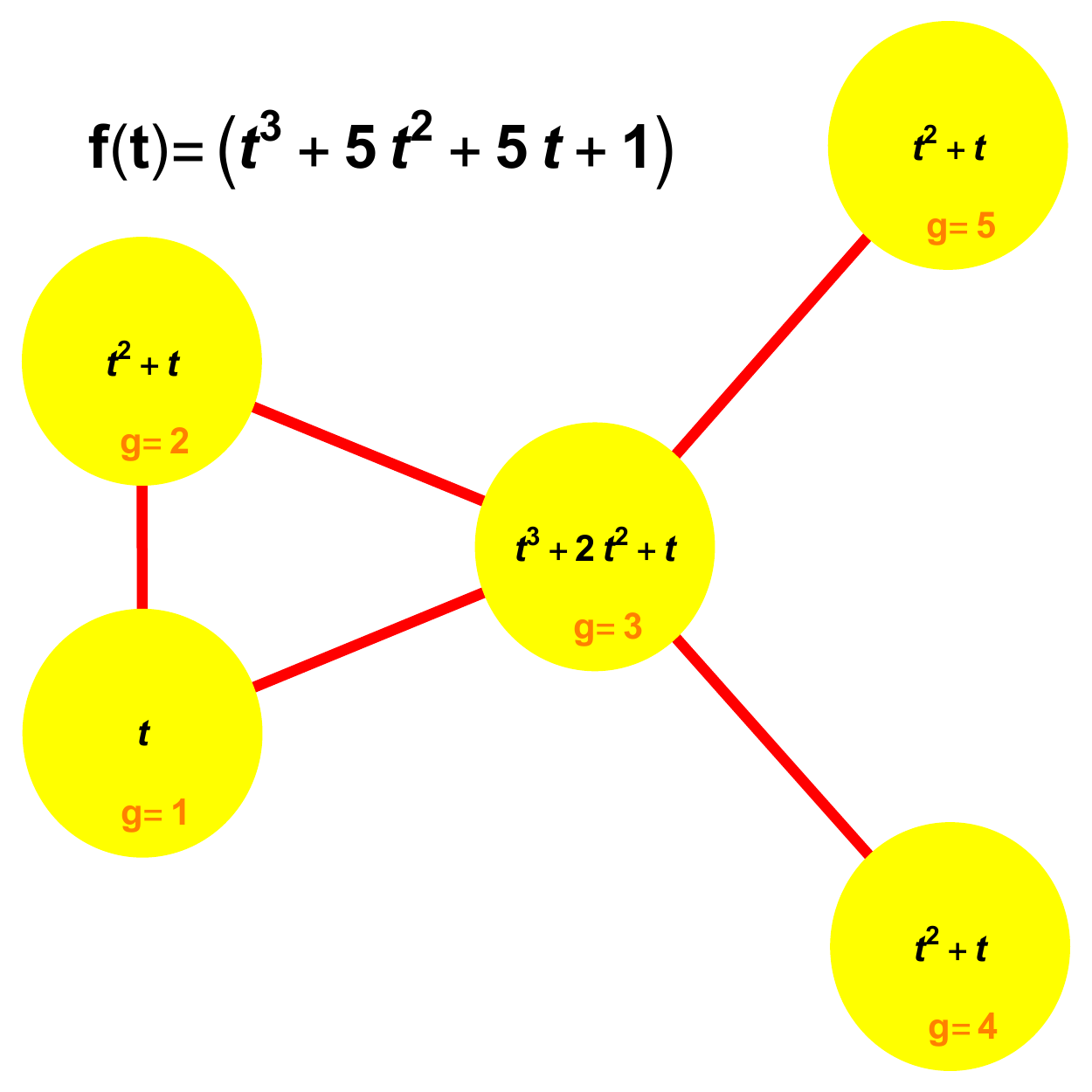}}
\label{gaussbonnetpoincarehopf}
\caption{
Illustration of the Gauss-Bonnet formula 
$f_G(t) = 1+\sum_{x \in V} F_{S(x)}(t)$
\cite{dehnsommervillegaussbonnet} and then the new Poincar\'e-Hopf formula 
$f_G(t) = 1+t \sum_{x \in V} f_{S_g(x)}(t)$ for a scalar function $g$ and
the $f$-function $f_G(t)$ of a graph $G$. In the first case, the 
curvature polynomials, in the second case the index polynomials are placed
at each vertex. 
}
\end{figure}

\paragraph{}
If $f=1+f_0 t+ \cdots + f_d t^{d+1}$ is the {\bf $f$-function} of $G=(V,E)$ 
encoding the $f$-vector $(f_0,f_1, \dots, f_d)$ of $G$ and 
$F(t)=\int_0^t f(s)ds$ is the anti-derivative of $f$, the equation 
\begin{equation}
f_G(t) = 1+\sum_{x \in V} F_{S(x)}(t) 
\label{scrumtrulescent}
\end{equation}
with unit sphere $S(x)$ is a parametrized Gauss-Bonnet formula which 
as the special case $t=-1$ evaluates to the standard Gauss-Bonnet 
formula for Euler characteristic:
$$ \chi(G) = \sum_x K(x) $$
with curvature $K(x)=\sum_{k=0}^{d} (-1)^k \frac{f_{k-1}(S(x))}{k+1}$
(understanding that $f_{-1}=1$). This curvature has appeared in \cite{Levitt1992}
but not in the context of a Gauss Bonnet. Not being aware of Levitt at first, 
it has appeared then in \cite{cherngaussbonnet}. 

\paragraph{}
A function version (\ref{scrumtrulescent}) of Gauss-Bonnet appeared in 
\cite{dehnsommervillegaussbonnet}. It was developed in order to understand the 
{\bf Dehn-Sommerville equations better}. 
The later are equivalent to the statement that 
$f(t-1/2)$ is either even or odd, a fact which quite readily 
implies that if all unit spheres $S(x)$ are Dehn-Sommerville graphs, 
then $G$ is a Dehn-Sommerville graph.  
Dehn-Sommerville graphs are ``generalized spheres" allowing to define
``generalized manifolds", graphs for which the unit spheres are Dehn-Sommerville
graphs of the same dimension. 

\section{Poincar\'e-Hopf}

\paragraph{}
Given a finite simple graph $G=(V,E)$ and a locally injective function $g$ 
from the vertex set $V$ to $\mathbb{R}$. This means that $g(x) \neq g(y)$ 
if $(x,y) \in E$.  If $f_G(t)$ is the $f$-vector of the graph and $S_g(x)$ is the graph 
generated by the part of the unit sphere $S(x)$, where $g$ is negative, the $f$-function 
$f_{S_g(x)}(t)$ of that part of the sphere is the {\bf index function} of $g$ at $x$. 
The {\bf parametrized Poincar\'e Hopf theorem} is

\begin{thm}[Parametrized Poincar\'e-Hopf]
$f_G(t) = 1+t \sum_{x} f_{S_g(x)}(t)$. 
\label{poincarehopf}
\end{thm}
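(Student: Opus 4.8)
The plan is to prove the identity by induction on the number of vertices $n=|V|$, peeling off one vertex at a time in the order prescribed by $g$. Order the vertices $x_1,\dots,x_n$ so that $g(x_1)<g(x_2)<\cdots<g(x_n)$ (a total order refining the local order suffices). Let $G_m$ be the subgraph induced on $\{x_1,\dots,x_m\}$, so $G_0$ is empty with $f_{G_0}(t)=1$ and $G_n=G$. The key combinatorial observation is that passing from $G_{m-1}$ to $G_m$ adds exactly the cliques of $G_m$ that contain $x_m$, and those cliques are in bijection with the cliques of the graph $S_g(x_m)$: a clique $\sigma\ni x_m$ in $G_m$ corresponds to $\sigma\setminus\{x_m\}$, which is a clique all of whose vertices are neighbors of $x_m$ with smaller $g$-value, i.e. a clique in $S_g(x_m)$ (here one uses that $S_g(x_m)$ as computed in $G$ equals the one computed in $G_m$, since all later vertices have larger $g$-value). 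A $k$-clique through $x_m$ corresponds to a $(k-1)$-clique of $S_g(x_m)$, which at the level of generating functions is exactly multiplication by $t$: this gives $f_{G_m}(t)-f_{G_{m-1}}(t)=t\, f_{S_g(x_m)}(t)$.

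Carrying out the plan, I would first fix the ordering and define $G_m$, then state and verify the bijection between cliques of $G_m$ containing $x_m$ and cliques of $S_g(x_m)$ (including the empty clique, which accounts for the vertex $x_m$ itself and matches the constant term $1$ in $f_{S_g(x_m)}(t)$ contributing the $t^1$ term, i.e. the vertex, in $f_{G_m}$). Then I would record the dimension shift: if $\tau$ is a $(k-1)$-dimensional simplex of $S_g(x_m)$ then $\tau\cup\{x_m\}$ is $k$-dimensional, so each monomial $f_{k-1}(S_g(x_m))\,t^{k}$ in $t\,f_{S_g(x_m)}(t)$ matches the increment in $f_k(G_m)$. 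Summing the telescoping identity $f_{G_m}(t)-f_{G_{m-1}}(t)=t\,f_{S_g(x_m)}(t)$ over $m=1,\dots,n$ and using $f_{G_0}(t)=1$ yields $f_G(t)=1+t\sum_{m=1}^n f_{S_g(x_m)}(t)$, which is the claim.

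The main obstacle — really the only subtle point — is the claim that $S_g(x_m)$ does not depend on which induced subgraph $G_m$ we compute it in, i.e. that the ``negative part of the unit sphere'' stabilizes once $x_m$ has been added. This holds precisely because any neighbor $y$ of $x_m$ with $g(y)<g(x_m)$ already appears among $x_1,\dots,x_{m-1}$, and adding $x_{m+1},\dots,x_n$ (all with $g$-value exceeding $g(x_m)$) introduces no new vertices with $g<g(x_m)$ in $S(x_m)$, and also adds no edges within $S_g(x_m)$ since those edges already lie in $G_{m-1}$. I would make this explicit, since it is what licenses writing $f_{S_g(x)}(t)$ unambiguously in terms of the full graph $G$. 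Everything else is bookkeeping: the bijection is immediate and the degree shift is just the factor $t$ in the definition of the $f$-function.
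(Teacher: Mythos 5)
Your proof is correct and follows essentially the same route as the paper: induction on the number of vertices, adding them in increasing order of $g$ and observing that the new cliques through the added vertex $x$ are in bijection with the cliques of $S_g(x)$ (plus the empty clique), contributing $t\,f_{S_g(x)}(t)$. You merely make explicit two points the paper's one-line proof leaves implicit, namely the ordering of the vertices by $g$-value and the stability of $S_g(x_m)$ under adding later vertices.
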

\begin{proof}
The proof goes by induction with respect to the number of vertices in $G$. 
We start with $G=0$, the empty graph, where both sides of the identity are $1$. 
When adding a new vertex $x$, we increase the graph from $G$ to $G+x$.
The $f$-function changes then by $t f_{S_g(x)}(t)$ because
any $K_k$ sub-graph $H$ of $S_g(x)$ defines a $K_{k+1}$ sub-graph 
$H+x$ of $G$. 
\end{proof} 

\paragraph{}
For $t=-1$, we get the {\bf Poincar\'e-Hopf theorem} for graphs. 
The left hand side is $1-\chi(G)$. The right hand side is $1+\sum_x i_g(x)$,
where $i_g(x)=1-\chi(S_g(x))$ is the {\bf Poincar\'e-Hopf index} of $g$ at $x$. 
The Poincar\'e-Hopf theorem is 
$$ \chi(G) = \sum_x i_G(x) \; . $$

\paragraph{}
A {\bf valuation} $X$ on a graph $G$ is a map from the set of sub-graphs 
of $G$ to $\mathbb{R}$ which satisfies the valuation property
$$ X(A \cup B) + X(A \cap B) = X(A) + X(B) \; . $$  
We have in \cite{valuation} proven a Poincar\'e-Hopf formula 
$$   X(G) = \sum_{v \in V(G)} i_{X,g}(v)  \; , $$
where $i_{X,g}(x) = X(B^-_g(x))-X(S^-_g(x))$.
The {\bf unit ball} $B(v)$ of $v \in V(G)$ is defined as the sub-graph of $G$. 
It contains
$B^-(x) = S^-(x) \cup \{x\} = \{ y \in B(x) \; | \; g(y) \leq g(x) \; \}$. 
The new parametrized formula is more elegant. 

\section{Computation of the $f$-vector}

\paragraph{}
The formula in Theorem~(\ref{poincarehopf}) gives an other way, besides 
the Gauss-Bonnet Theorem~(\ref{scrumtrulescent}), to compute the $f$-vector 
of a graph recursively. The task reduces to the computation of the $f-$ function
of parts of the unit sphere. 
Given a random $g$, we can expect half of each unit sphere $S(x)$ to belong to
$S_g(x)$. In the first step, we can reduce the computation to $n$ computations 
of $f$-vectors of graphs which are of expected size $n/2$. Each of these
cases will need the computation will then involve the computation 
of an $f$-vector of graphs of size $n/4$ etc. 

\paragraph{}
A rough estimate gives a typical complexity of $O(n^{\log(n)^2})$ but this 
is not the worst case as we might be unlucky and get sphere parts $S_g(x)$ which
are large and also get unit spheres $S(x)$ which are large. 
Note however that in general the situation is much better, 
as the unit sphere $S(x)$ makes for typical 
graphs only a small local part of the network. We definitely use now this method
for our own computations of the $f$-vector as we can avoid making a list
of all the complete subgraphs of $G$ which is a task which can get us left stranded
if the graph is too large. 

\paragraph{}
Here are some computations done with the code below using {\bf Erd\"os-R\'enyi graphs}
\cite{erdoesrenyi59}, of size $n$ with edge probability $p=0.5$. 
In each case, we computed $10$ samples 
and averaged the time used to compute the $f$-vector. The code used to perform this
computation is given below. 

\begin{center}
\begin{tabular}{|c|c|} \hline
n   &  time in seconds   \\ \hline
10  &  0.065198 \\
20  &  0.382186 \\
30  &  1.743940 \\
40  &  5.80560 \\
50  &  15.4539 \\ 
60  &  30.5996 \\
70  &  53.8462 \\
80  &  121.376 \\ 
90  &  204.188 \\ 
100 &  336.029 \\ \hline
\end{tabular}
\end{center} 

\section{Index expectation}

\paragraph{}
If $(\Omega,\mu)$ is a probability space of functions $g$, denote by ${\rm E}[X]$ the
expectation of a random variable $X$. For every vertex $x$, the map $g \to i_g(x)$ 
is an example of a random variable on $(\Omega,\mu)$. 
Its expectation $K(x) = {\rm E}[i_g(x)]$ is the {\bf curvature} at $x$. 
From Poincar\'e-Hopf, we get $\chi(G) = \sum_x K(x)$. Of course, the curvature $K(x)$ depends
on the probability space. Let us call a probability space {\bf homogeneous} if it is either given by 
the product measure $(\Omega,\mu)=([0,1]^V,dx^{\rm |V|})$ or the counting measure 
on the set of all {\bf $c$-coloring} if the chromatic number of the graph is smaller 
or equal than $c$. 

\begin{lemma}
For a homogeneous probability space, ${\rm E}[ t f_{S_g(x)}(t) ] = F_{S(x)}(t)$. 
\end{lemma}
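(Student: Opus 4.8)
The plan is to expand both sides as polynomials in $t$ and match coefficients, the real content being a computation of the expectation of each clique count of $S_g(x)$. Writing $f_{S_g(x)}(t) = 1 + \sum_{k \ge 0} f_k(S_g(x))\, t^{k+1}$ and using linearity of expectation gives ${\rm E}[\,t f_{S_g(x)}(t)\,] = t + \sum_{k\ge 0} {\rm E}[f_k(S_g(x))]\, t^{k+2}$, while $F_{S(x)}(t) = \int_0^t f_{S(x)}(s)\,ds = t + \sum_{k \ge 0} \frac{f_k(S(x))}{k+2}\, t^{k+2}$. So the lemma is equivalent to the coefficient identity ${\rm E}[f_k(S_g(x))] = f_k(S(x))/(k+2)$ for every $k \ge 0$, and I would isolate this first.

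Next I would rewrite the left-hand side as a sum over simplices. A $k$-dimensional clique of $S_g(x)$ is precisely a $k$-dimensional clique $\sigma$ of $S(x)$ on which $g$ is everywhere smaller than $g(x)$; equivalently, since every vertex of $S(x)$ is adjacent to $x$, the set $\tau := \sigma \cup \{x\}$ is a clique of $B(x)$ with $k+2$ vertices on which $x$ realizes the maximum of $g$. Hence, by linearity of expectation, ${\rm E}[f_k(S_g(x))] = \sum_\sigma {\rm P}[\,g(x) = \max_{w \in \tau} g(w)\,]$, the sum ranging over the $f_k(S(x))$ cliques $\sigma$ of dimension $k$ in $S(x)$.

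The crux is the claim that for a homogeneous probability space and any clique $\tau$ with $m$ vertices, ${\rm P}[\,g(v) = \max_{w\in\tau} g(w)\,] = 1/m$ for each $v \in \tau$. For the product measure $([0,1]^V, dx^{|V|})$ the values $g(w)$, $w \in \tau$, are i.i.d. uniform, hence almost surely pairwise distinct, so by symmetry each of the $m$ vertices of $\tau$ is equally likely to carry the maximum. For the uniform measure on proper $c$-colorings (with $c$ at least the chromatic number) the colors on a clique are automatically pairwise distinct, and here I would exploit the global color-permutation symmetry: for a bijection $\rho$ of $\{1,\dots,c\}$ the map $g \mapsto \rho\circ g$ permutes the proper colorings and carries $\{g : g|_\tau = \iota\}$ bijectively onto $\{g : g|_\tau = \rho\circ\iota\}$; since any two injections $\iota,\iota'\colon \tau \hookrightarrow \{1,\dots,c\}$ (both with image of size $m$) differ by some global $\rho$, the number of proper colorings restricting to a given injection on $\tau$ is independent of that injection. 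Summing over the injections in which $v$ receives the largest color then yields probability $1/m$. The naive shortcut of permuting colors only on $\tau$ fails, since it may destroy properness along edges leaving $\tau$; routing through the global action is exactly what is needed, and getting this step clean is the main obstacle.

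Granting the claim with $m = k+2$, every term in the sum over $\sigma$ equals $1/(k+2)$, so ${\rm E}[f_k(S_g(x))] = f_k(S(x))/(k+2)$, which is the coefficient identity isolated at the outset; reassembling the polynomial in $t$ finishes the proof. Equivalently, ${\rm E}[\,t f_{S_g(x)}(t)\,] = \int_0^t f_{S(x)}(s)\,ds = F_{S(x)}(t)$, and taking the expectation of the parametrized Poincar\'e--Hopf formula of Theorem~\ref{poincarehopf} then recovers the Gauss--Bonnet formula~(\ref{scrumtrulescent}).
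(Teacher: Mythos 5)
Your proof is correct and follows essentially the same route as the paper: expand both sides in powers of $t$ and reduce the lemma to the coefficient identity ${\rm E}[f_k(S_g(x))]=f_k(S(x))/(k+2)$, which follows because each $k$-clique of $S(x)$ together with $x$ forms a clique on $k+2$ vertices, each equally likely to carry the maximum of $g$. The paper's one-line proof merely asserts this from ``homogeneity'' (and with an index shift, writing the coefficient of $t^k$ as $f_k$), whereas you supply the symmetry argument explicitly for both the product measure and the coloring measure, the latter via the global color-permutation action --- a detail the paper leaves entirely to the reader.
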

\begin{proof}
Look at each component $f_k(S(x)) t^k$. Integrating gives $f_k(S(x)) t^{k+1}/(k+1)$.
But due to the homogeneity assumption, $f_k(S_g(x)) = f_k(S(x))/(k+1)$. 
\end{proof}

\paragraph{}
This gives Gauss-Bonnet from Poincar\'e-Hopf:

\begin{coro}
$f_G(t) = 1+\sum_{x} F_{S(x)}(t)$.
\end{coro}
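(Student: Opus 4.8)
The plan is to integrate the parametrized Poincar\'e--Hopf identity of Theorem~\ref{poincarehopf} over a homogeneous probability space of functions $g$ and then invoke the preceding Lemma term by term.

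First I would fix a homogeneous probability space $(\Omega,\mu)$, either the product space $([0,1]^V, dx^{|V|})$ or the uniform counting measure on the set of proper $c$-colorings with $c$ at least the chromatic number of $G$. In the product case a function $g$ fails to be locally injective only on a $\mu$-null set, namely the finitely many coincidence sets $\{g : g(x)=g(y)\}$ with $(x,y)\in E$; in the coloring case every $g$ is locally injective by construction. Hence Theorem~\ref{poincarehopf} applies for $\mu$-almost every $g$, giving the polynomial identity $f_G(t) = 1+t\sum_{x\in V} f_{S_g(x)}(t)$ valid $\mu$-almost surely.

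Next I would take the expectation ${\rm E}[\cdot]$ of both sides. The left side $f_G(t)$ and the constant $1$ do not depend on $g$, so they are unaffected; since $V$ is finite, linearity of expectation (applied coefficient by coefficient in $t$) lets me move ${\rm E}$ inside the sum, yielding $f_G(t) = 1+\sum_{x\in V} {\rm E}[\, t f_{S_g(x)}(t)\,]$. Finally I apply the Lemma, which says exactly that ${\rm E}[\, t f_{S_g(x)}(t)\,] = F_{S(x)}(t)$ for each vertex $x$ under the homogeneity hypothesis, and substitute to obtain $f_G(t) = 1+\sum_{x\in V} F_{S(x)}(t)$.

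I expect no genuine obstacle: the only points requiring a word of care are the almost-sure validity of Poincar\'e--Hopf in the product-measure model and the interchange of ${\rm E}$ with the finite sum over $V$, both routine. All the substantive content is carried by the already-established Lemma, so the corollary is essentially a one-line consequence of averaging Theorem~\ref{poincarehopf}.
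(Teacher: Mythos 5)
Your proposal is correct and matches the paper's (implicit) argument exactly: the corollary is obtained by averaging the parametrized Poincar\'e--Hopf identity over a homogeneous probability space and applying the preceding Lemma term by term. The extra care you take about almost-sure local injectivity in the product-measure model is a reasonable refinement but does not change the route.
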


\paragraph{}
The link between Poincar\'e-Hopf and Gauss-Bonnet makes curvature
more intuitive as ``curvature is index expectation". A physicist 
could see the indices as integer spin values of ``particles" and curvature
as an expectation of such values when averaging over random functions $g$
which can be ``wave function probability amplitudes". 

\section{The manifold case}

\paragraph{}
In the manifold case, the discrete Poincar\'e-Hopf theorem leads to the
classical Poincar\'e-Hopf theorem from differential topology \cite{poincare85,hopf26}
(see e.g. \cite{Spivak1999}). 
In the Morse case, where the center manifold $B_g(x)=\{y \in S(x), g(y)=g(x) \}$ 
is either a $(d-2)$-sphere or a product of two spheres, the index is $\pm 1$.
When taking expectations over random functions $g$, Poincar\'e-Hopf
which again for $t=-1$ is the classical Gauss-Bonnet theorem, 
which in the manifold case leads to Gauss-Bonnet-Chern result in the continuum.

\paragraph{}
If $G$ is a discrete $2n$-manifold then for every $g$, we have the 
$(2n-2)$-dimensional {\bf center manifold} 
$B_g(x) = B(x) =\{ g(y)=g(x) \} \subset S(x)$ defined in 
\cite{KnillSard,indexformula} and an index 
$j(x)=1-\chi(B_g(x))/2$. Poincar\'e-Hopf combined with Gauss Bonnet gives
$$ \chi(G) = \sum_x j(x) = \sum_x \sum_{y \in S(x)} (1-K_g(y))  \; . $$
In the case of a $4$-dimensional manifold, then 
$K_g(y)$ is already a sectional curvature at a point $y$ of
a $2$-dimensional random surface
inside $S(x)$. This led to the insight that Euler
characteristic is a sort of {\bf average sectional curvature}.
As scalar curvature is an average over sectional curvatures, 
this corresponds to a {\bf Hilbert action} in the continuum. Euler characteristic
appears to be an interesting quantized (integer valued) functional from a physics 
point of view. 

\begin{figure}[!htpb]
\scalebox{0.5}{\includegraphics{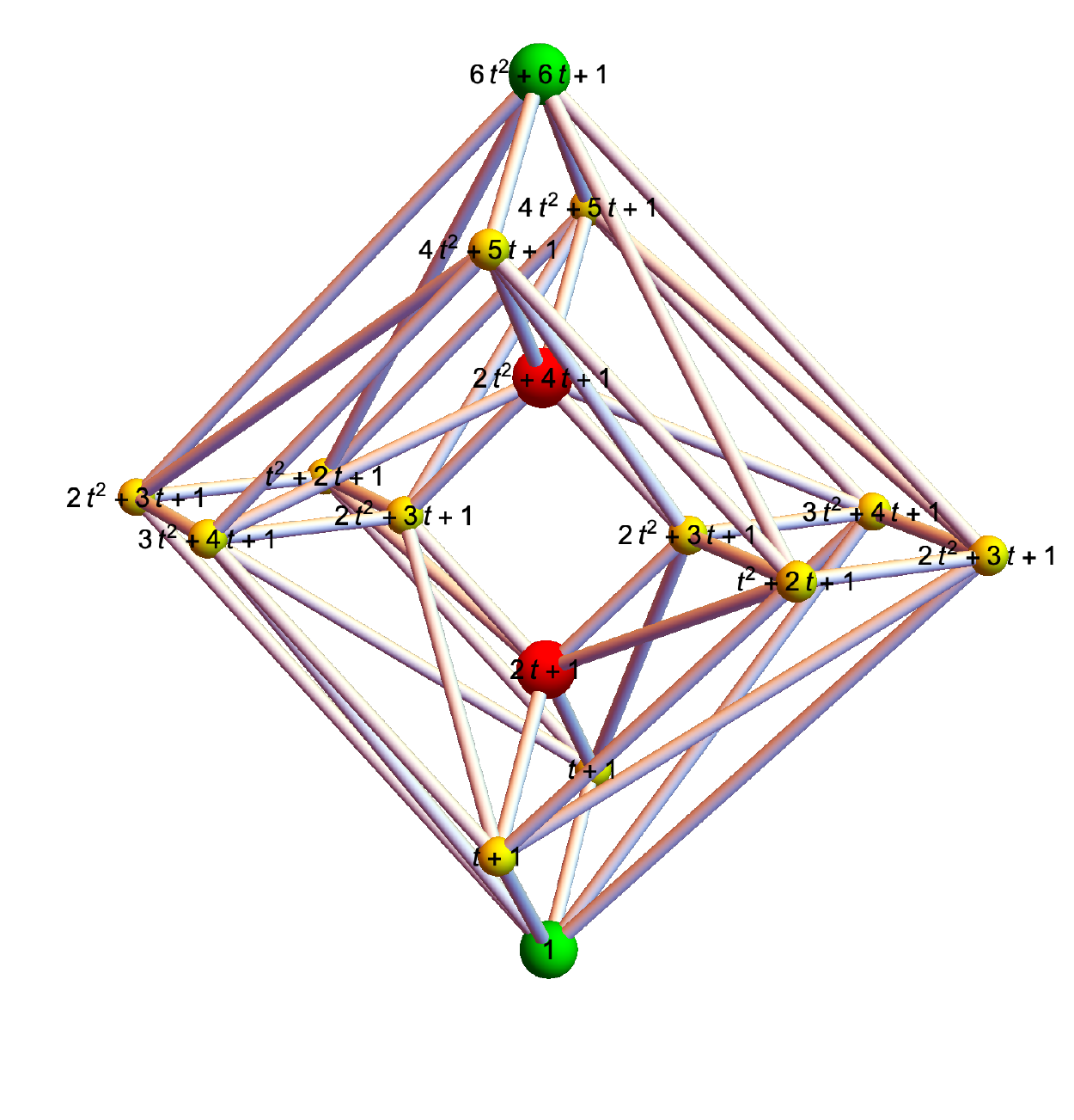}}
\scalebox{0.5}{\includegraphics{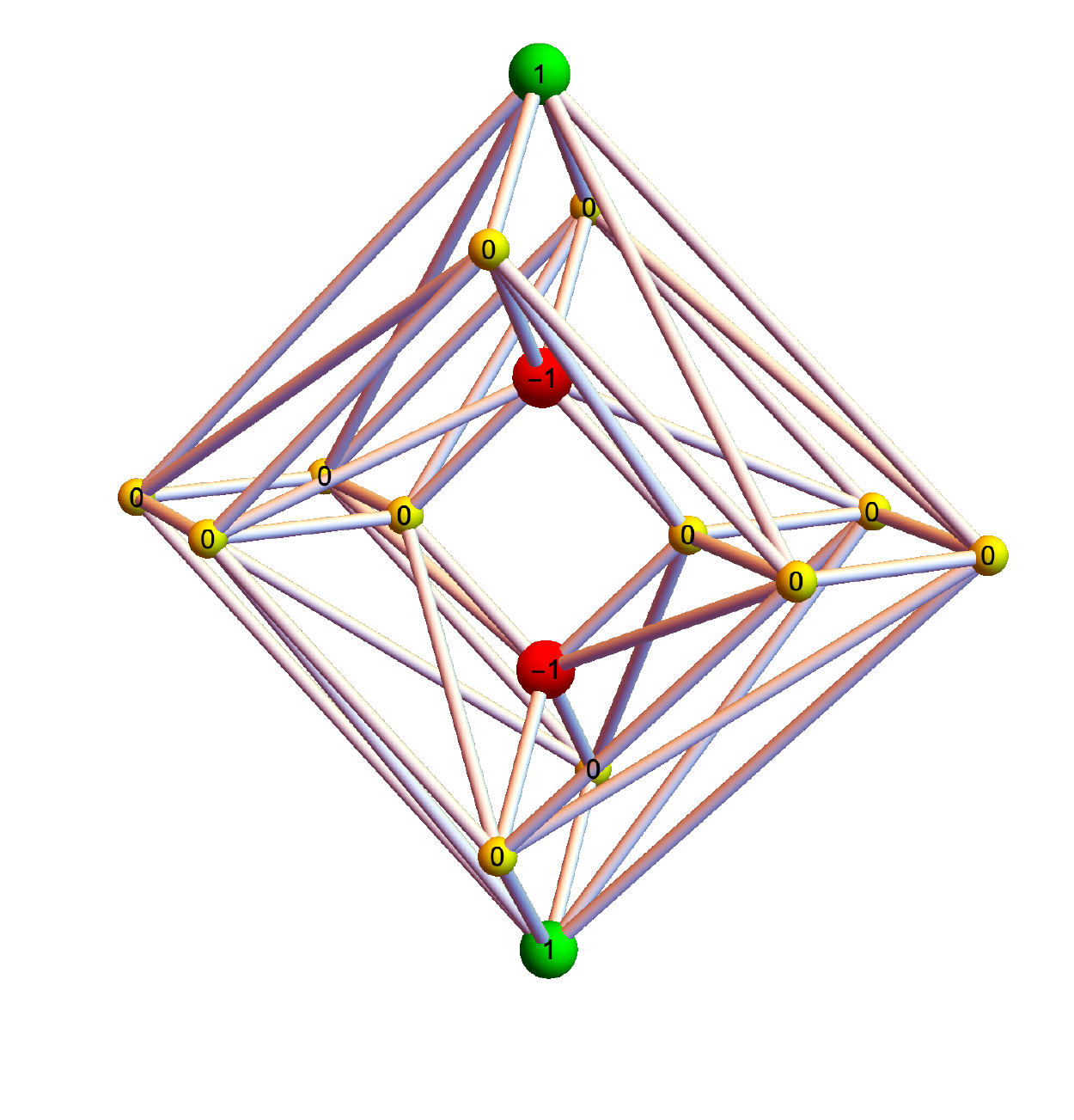}}
\label{torus}
\caption{
The Gauss-Bonnet theorem is here applied to
a regular triangulation of a smooth $2$-dimensional torus with $16$ vertices
and $f$-vector $(16,48,32)$ and $f$-function $32 t^3+48 t^2+16 t+1$. 
We took the function $g$ which is the height in an embedding so that $g$
is Morse (the center manifold $B_f(x)$ at the minimum is an empty graph
which is a $-1$ sphere; at the maximum, it is a $1$-sphere; in the hyperbolic case, 
it is the product of two $2$-spheres, a graph with 4 vertices). 
The first picture shows the index polynomials. The second one evaluates
the polynomials at $t=-1$, which gives the integer indices which in the Morse
case are always in $\{ 0,1,-1 \}$. The sum of the indices is the
Euler characteristic $\chi(G)=0$. The sum of the polynomial indices times 
$t$ plus $1$ is the $f$-function by the parametrized Poincar\'e-Hopf theorem. 
}
\end{figure}

\section{Examples}

\paragraph{}
If $G$ is a complete graph $G=K_n$, the choice of the function $g$
does not matter. We always get the indices $1,(1+t),(1+t)^2, \dots, (1+t)^{n}$
the function just determines to which vertex these polynomials are applied. 
The Poincar\'e-Hopf theorem tells that the $f$-function of $G$ is
$$ f_G(t) = (1+t)^{n+1}  = 1+ t (\sum_{k=0}^{n} (1+t)^k)  \; , $$
which can easily be checked directly by looking at the roots. 
% 1+t Sum[ (1+t)^k,{k,0,n} ] //Simplify 

\paragraph{}
If $G$ is a cycle graph $G=C_n$ and $g$ is a locally injective function, 
there are three type of points: minima for which $i_{x,g}(t)=1$, 
then maxima, where $i_{x,g}(t)=1+2t$ and then the regular points, 
where $i_{x,g}(t)=1+t$. Evaluated at $t=-1$, we get indices $1$,
$-1$ or $0$. We have the same number of maxima and minima so that
the sum over all $i_{x,g}(t)$ is just $n (1+t)$. Now
$$ f_G(t) = 1+n t + n t^2  = 1 + t n(1+t) $$
confirms the Poincar\'e-Hopf formula here. 

% s=ErdoesRenyi[10,0.5]; g=morsefunction[s]; indices[g,s,t] 

\paragraph{}
If $G$ is a graph and $G_1$ the Barycentric refinement in which the 
vertex set are the complete subgraphs of $G$ and two are connected if 
one is contained in the other, then the function $g(x) = {\rm dim}(x)$
is a locally injective function. The Poincar\'e-Hopf index is 
$\omega(x) = (-1)^{|x|} = (-1)^{{\rm dim}(x)+1}$. 
The Poincar\'e-Hopf formula 
$$ \chi(G) = \sum_{x \in V(G_1)} \omega(x) $$
just tells now that the Euler characteristic of the Barycentric refinement 
$G_1$ of $G$ is the same than the Euler characteristic of $G$. 

\paragraph{}
In that case, $i_{g,x}(t) = (1+t)^{{\rm dim}(x)} - t^{{\rm dim}(x)}$. 
The Poincar\'e-Hopf formula now connects
$$ f_G(t) = 1 + f_0 t + f_1 t^2 + \cdots + f_d t^{d+1} $$
with a sum $1+t \sum_x i_{g,x}(t)$ of index polynomials. 

\begin{figure}[!htpb]
\scalebox{0.4}{\includegraphics{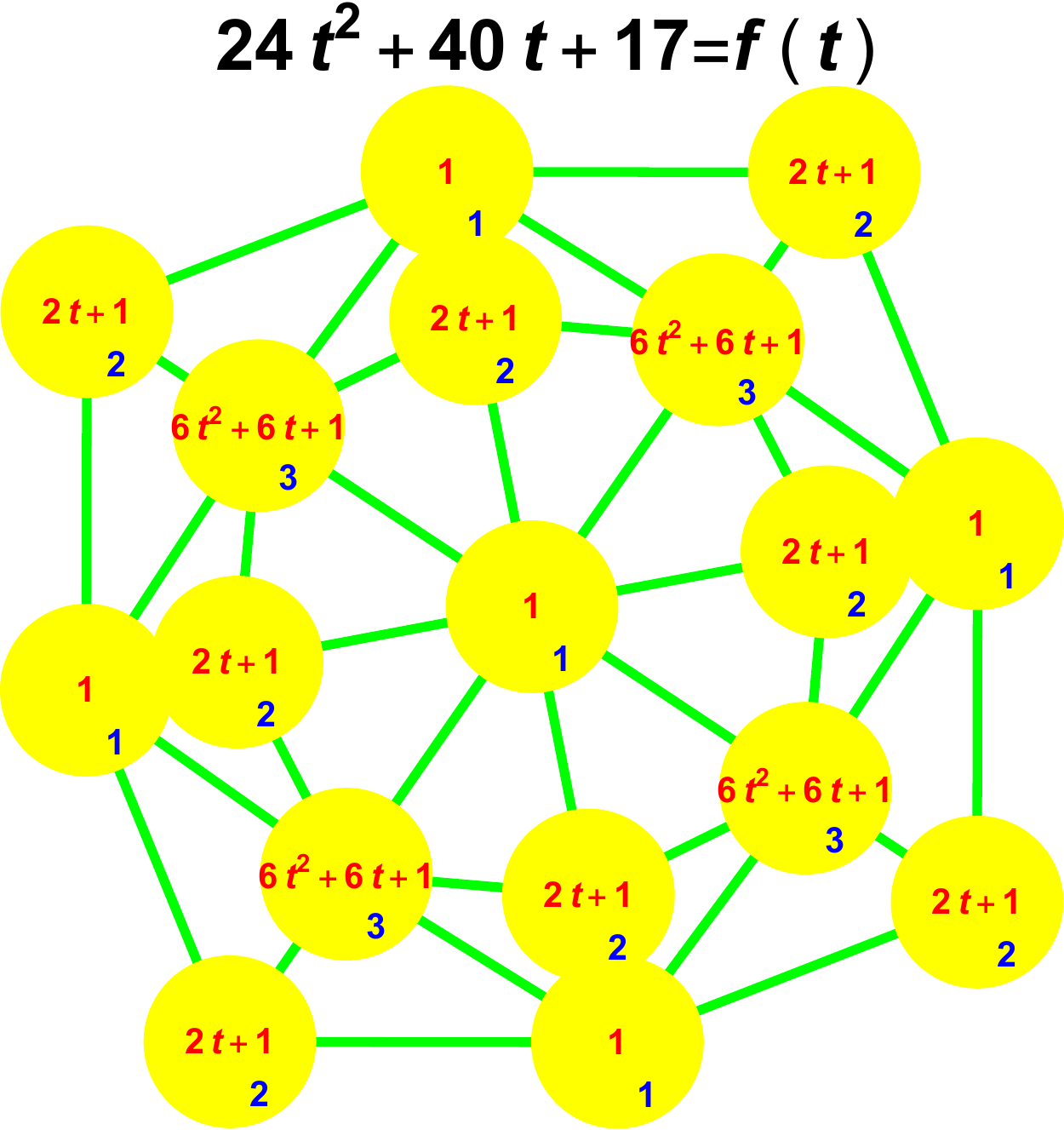}}
\scalebox{0.4}{\includegraphics{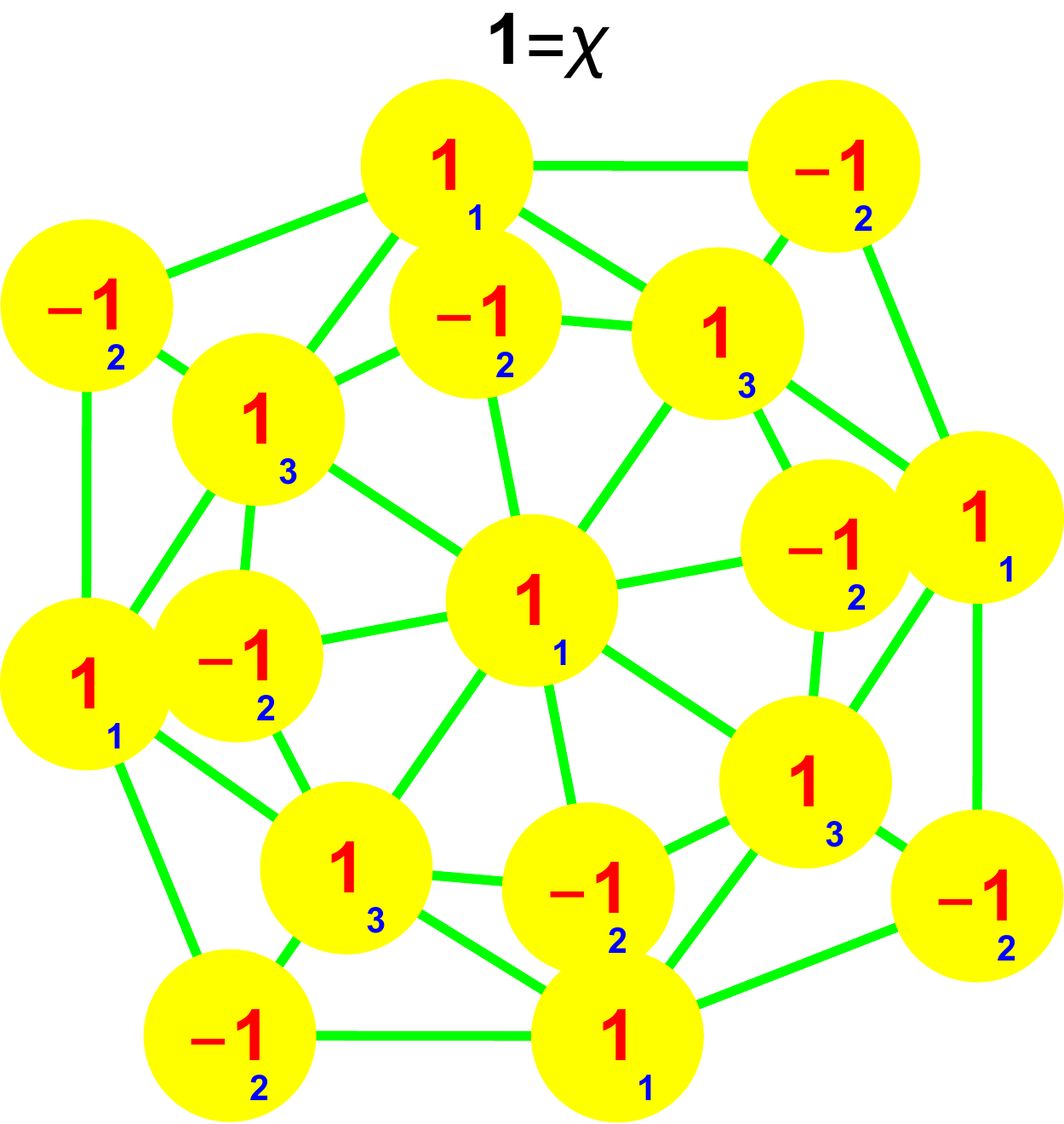}}
\label{barycentric}
\caption{
If the function $g(x)={\rm dim}(x)$ is the dimension function 
on a Barycentric refinement $G_1$ of a graph $G$,
then $i_{g,x}(t)$ at $x$ is a polynomial
of degree ${\rm dim}(x)$. We see here the case, when $G$ was a wheel graph. 
The smaller numbers inscribed below the index polynomials
are the values of $g$, which are the dimensions of the simplices
in the original graph $G$. 
}
\end{figure}

\section{Arithmetic compatibility}

\paragraph{}
The {\bf join}  $G+H==(V,E) + (W,F)$ of two finite simple graphs is the 
graph $(V \cup W, E \cup F \cup \{ (a,b), a \in V, b \in W \})$. 
Given a locally injective function $g$ on $G$ and a locally injective
function $h$ on $H$, we can look at the function $(g,h)$ which is
on $V$ given by the function $g$ and on $W$ given by the function $h$. 
There is a relation between the indices of $g$ on $G$ and $h$ on $H$ 
and $(g,h)$ on $G+H$, at least if $h$ dominates $g$. 

\paragraph{}
Of course, we have 
$f_{G+H}(t) = f_G(t) f_H(t)$ for the $f$-functions. 
But we also can have a relation between the indices: 

\begin{propo}
If $\min(h) > \max(g)$, then 
$$i_{v,g}(t) = i_{v,(g,h)}(t) $$ 
for $v \in V$ and 
$$i_{w,h}(t) = f_G(t) i_{w,(g,h)}(t)  \;  $$
for $w \in W$. 
\end{propo}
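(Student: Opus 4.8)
The plan is to avoid induction entirely and prove both identities by directly identifying the descending unit spheres inside the join $G+H$. The only structural inputs I need are the description of a unit sphere in a join together with the multiplicativity $f_{G+H}(t)=f_G(t)\,f_H(t)$ recorded just above, and the definition of the index as the $f$-function of the descending sphere, $i_{x,g}(t)=f_{S_g(x)}(t)$.

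First I would handle $v\in V$. In $G+H$ every vertex of $V$ is adjacent to all of $W$, so the unit sphere of $v$ is the join $S_{G+H}(v)=S_G(v)+H$. Since $(g,h)(v)=g(v)$ and every $h$-value is at least $\min(h)>\max(g)\ge g(v)$, no vertex of $W$ lies below $v$; thus the descending sphere collapses to $S_{(g,h)}(v)=S_g(v)$. Passing to $f$-functions gives $i_{v,(g,h)}(t)=f_{S_g(v)}(t)=i_{v,g}(t)$, which is the first claim.

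Next I would treat $w\in W$. Here $S_{G+H}(w)=G+S_H(w)$ and $(g,h)(w)=h(w)$. Because $h(w)\ge\min(h)>\max(g)\ge g(y)$ for every $y\in V$, all of $V$ now lies below $w$, while inside $H$ the usual descending set $S_h(w)$ survives; hence $S_{(g,h)}(w)=G+S_h(w)$. Applying multiplicativity to this join yields $f_{S_{(g,h)}(w)}(t)=f_G(t)\,f_{S_h(w)}(t)$, i.e. a relation between $i_{w,(g,h)}(t)$ and $i_{w,h}(t)$ in which the factor $f_G(t)$ appears.

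The delicate point---and the one I would treat as the main obstacle---is the placement of that factor, since this is exactly where the proposition must be read with care. The join computation makes the combined descending sphere the larger object (it has absorbed all of $G$ below $w$), so the identity it produces is $i_{w,(g,h)}(t)=f_G(t)\,i_{w,h}(t)$, with $f_G(t)$ multiplying the original $H$-index. Matching this to the printed form $i_{w,h}(t)=f_G(t)\,i_{w,(g,h)}(t)$ is therefore the crux: as written the factor sits on the opposite side, and the two expressions coincide only when $f_G(t)^2=1$. Before trusting either orientation I would re-check the domination convention built into the definition of $S_g$; but absent such a reinterpretation the sphere calculation assigns $f_G(t)$ to the combined index, and I would expect the proof to establish the $W$-vertex relation in that orientation.
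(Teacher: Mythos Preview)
Your approach is exactly the paper's: the proof there consists of the two sphere identifications $S_{G+H,(g,h)}(v)=S_g(v)$ and $S_{G+H,(g,h)}(w)=G+S_h(w)$, from which the index relations follow by multiplicativity of $f$-functions under joins. Your observation about the placement of the factor $f_G(t)$ is also correct: the sphere identity $S_{(g,h)}(w)=G+S_h(w)$ yields $i_{w,(g,h)}(t)=f_G(t)\,i_{w,h}(t)$, so the second displayed formula in the proposition has the factor on the wrong side; the paper's own proof supports your orientation, not the one printed in the statement.
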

\begin{proof}
The unit spheres satisfy 
$$S_{G+H,(g,h)}(v)=S_g(G,v) $$ and
$$S_{G+H,(g,h)}(w) = G + S_h(H,w)  \; . $$
\end{proof} 

\paragraph{}
This means that if also the functions of the sum are
compatible, then the indices of the join graph are
determined from the indices of the individual components. 

\section{f-matrix computation}

\paragraph{}
The {\bf $f$-matrix} $f_{ij}$ counts the number of intersecting
$i$ and $j$-dimensional simplices in $G$. It defines the 
{\bf multi-variate $f$-function} $f_G(t,s) = \sum_{i,j} f_{ij}(G) t^i s^j$. 
The value $-f(-1,-1)= \omega(G)$ is the {\bf Wu characteristic} of $G$.
(see \cite{valuation,CohomologyWuCharacteristic}). 

\paragraph{}
There is a Gauss-Bonnet formula for Wu characteristic
$$ \omega(G) = \sum_{v \in V(G)} K(v)  \; , $$
where $K(v) = \sum_{x \sim y, v \in x} \omega(x) \omega(y)/(|x|+1)$ is the 
{\bf Wu curvature}. The sum is over all intersecting pairs $x,y$ of 
simplices in $G$ for which $x$ contains the vertex $v$. 

\paragraph{}
Because the simplex $y$ can be outside the unit sphere of $v$, the Poincar\'e-Hopf
formula for Wu characteristic is a bit more complicated.
It has been pointed out in \cite{valuation} that the most elegant formulation 
is done when the index is replaced with an index for pairs of vertices: 
$$  i_g(v,w) =  \omega(B_g(v),B_g(w)) - \omega(B_g(v),S_g(w))
              - \omega(S_g(v),B_g(w)) + \omega(S_g(v),S_f(w)) \; . $$
The Poincar\'e-Hopf formula for the Wu characteristic was then 
$$ \omega(G) = \sum_{v,w} i_g(v,w) \; . $$

\paragraph{}
To generalize this in a functorial way, 
we define the {\bf $f$-intersection function} 
$$ f_{A,B}(t,s) = \sum_{i,j} f_{ij}(A,B) t^i s^j  \; , $$
where $f_{ij}(A,B)$ is the number of pairs of $i$-simplices in $A$ and 
$j$-simplices in $B$ which do intersect. This is a quadratic form. 

\paragraph{}
Of course, $f_{G,G}(t,s) = f_G(t,s)$ is the generating function 
of the $f$-matrix $f_{ij}(G,H)$ counting the number of intersections of 
$i$-dimensional 
simplices in $G$ with $j$-dimensional simplices in $H$.  
The {\bf functional Poincar\'e-Hopf theorem} now is a result for the 
generating function of the intersection number $\omega(G,H)$:

\begin{thm}
$f_{G,H}(t,s) = \sum_{v,w} f_{B(v),B(w)}(t,s) 
                         - f_{B(v),S(w)}(t,s) 
                         - f_{S(v),B(w)}(t,s) 
                         + f_{S(v),S^-_f(w)}(t,s)$.
\end{thm}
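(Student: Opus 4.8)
The plan is to prove the identity by the same bookkeeping principle that underlies Theorem~\ref{poincarehopf}: a locally injective $g$ singles out in every simplex a unique vertex at which $g$ is largest, and among the simplices of the filtered unit ball $B_g(v)=S_g(v)+\{v\}$ the ones that contain $v$ are exactly the simplices of $G$ whose $g$-largest vertex is $v$. Throughout, $v$ runs over the vertices of $G$ and $w$ over the vertices of $H$, with $B_g(v),S_g(v)$ formed inside $G$ and $B_g(w),S_g(w)$ inside $H$; $G$ and $H$ are regarded as sub-complexes of a common host, so that the intersection of a simplex of $G$ with a simplex of $H$ is a well-defined set, and the last term of the statement is read as $f_{S_g(v),S_g(w)}(t,s)$ as in the abstract.

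First I would rewrite the right hand side. For a pair $(v,w)$ put
\[
\iota_g(v,w)=f_{B_g(v),B_g(w)}(t,s)-f_{B_g(v),S_g(w)}(t,s)-f_{S_g(v),B_g(w)}(t,s)+f_{S_g(v),S_g(w)}(t,s).
\]
The simplices of $B_g(v)$ split into those that contain $v$ and those lying in $S_g(v)$, and likewise for $w$, so the four terms above are organized by a $2\times 2$ inclusion-exclusion; reading it off shows that $\iota_g(v,w)$ is the generating function $\sum t^{\dim\sigma}s^{\dim\tau}$ over all intersecting pairs $(\sigma,\tau)$ in which $\sigma$ is a simplex of $B_g(v)$ containing $v$ and $\tau$ is a simplex of $B_g(w)$ containing $w$. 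The combinatorial core is then the claim that a simplex $\sigma$ of $G$ is a simplex of $B_g(v)$ containing $v$ precisely when $v=\operatorname{argmax}_{x\in\sigma}g(x)$: if $v\in\sigma\subseteq\{v\}\cup S_g(v)$ then every other vertex of $\sigma$ has strictly smaller $g$-value, and conversely, if $v$ maximizes $g$ on $\sigma$ then each remaining vertex of $\sigma$ is adjacent to $v$ (vertices of a clique are pairwise adjacent) and of smaller $g$-value, hence lies in $S_g(v)$; local injectivity of $g$ makes this maximizing vertex unique.

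Combining the two observations finishes the proof: every intersecting pair $(\sigma,\tau)$ with $\sigma$ a simplex of $G$ and $\tau$ a simplex of $H$ is counted in $\iota_g(v,w)$ for exactly one pair $(v,w)$, namely $v=\operatorname{argmax}_{x\in\sigma}g(x)$ and $w=\operatorname{argmax}_{x\in\tau}g(x)$, so $\sum_{v,w}\iota_g(v,w)=\sum_{\sigma\cap\tau\neq\emptyset}t^{\dim\sigma}s^{\dim\tau}=f_{G,H}(t,s)$. I do not expect a real obstacle here: the statement is the bilinear, Wu-characteristic shadow of Theorem~\ref{poincarehopf}, and the same argument can alternatively be run as an induction on $|V(G)|+|V(H)|$, adding the vertices in order of increasing $g$ so that a newly added vertex is automatically the $g$-maximum of every new simplex through it. The only points that genuinely need care are the conventions: that the empty simplex contributes nothing on either side (it meets no simplex and has no $g$-largest vertex), that $f_{A,B}(t,s)$ is well defined for sub-complexes $A\subseteq G$, $B\subseteq H$ of a common host, and that the four signs of the inclusion-exclusion come out exactly as written; testing the identity on $K_2$ and on a short path exhibits all of these at once.
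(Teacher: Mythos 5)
Your proof is correct and follows essentially the same route as the paper's: a $2\times 2$ inclusion--exclusion over the four ball/sphere terms to isolate the pairs $(\sigma,\tau)$ with $v\in\sigma$ and $w\in\tau$, summed over all vertex pairs. You are in fact more careful than the paper's own one-paragraph argument in spelling out why no pair is counted twice --- namely that a simplex of $B_g(v)$ containing $v$ is exactly a simplex whose $g$-maximal vertex is $v$, unique by local injectivity --- and in fixing the notational slips ($B(v)$ versus $B_g(v)$, and $S^-_f(w)$ read as $S_g(w)$ per the abstract).
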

\begin{proof}
This is a simple inclusion, exclusion principle: we can count pairs of intersecting simplices 
$(x,y)$ by looking at simplices $x$ containing a vertex $v$ and simplices $y$ 
containing a vertex $w$. The polynomial 
$$ f_{B(v),B(w)}(t,s) - f_{B(v),S(w)}(t,s) - f_{S(v),B(w)}(t,s) + f_{S(v),S^-_f(w)}(t,s) $$ 
encodes the intersection cardinalities of intersecting pairs of simplices $(x,y)$, where $x$ 
contains $v$ and $y$ contains $w$. By summing up over all vertex pairs $(v,w)$, we cover all 
intersections and get $f_{G,H}(t,s)$. 
\end{proof} 

\paragraph{}
This result is in principle useful as it allows to compute the f-matrix of a graph recursively. 
We reduce the computation to local situations, where we have intersections of unit balls
or unit spheres. While for large $n$, the computation is much faster than making a list of 
all intersecting pairs $(x,y)$ of simplices $x \in G, y \in H$, we see in practice that 
a full recursive computation like that is rather slow. 
A better algorithm is to compute $f_{G,H}(s,t)$ for smaller graphs
directly by listing all simplices and only break apart the computation into smaller parts 
for graphs of larger size. Our code example below does it that way. 

\section{Remarks}

\paragraph{}
As we have now already seen Gauss-Bonnet and Poincar\'e-Hopf, Dehn-Sommerville 
in a functorial way, one can ask about generalizing other theorems like
Euler-Poincar\'e, Riemann-Roch or Brouwer-Lefshetz \cite{brouwergraph} 
which deal with Euler characteristic.
However, for the later, cohomology is involved and the specifics of the $f$-vector 
are not important. One would then rather look at the $b$-function 
$b_G(t) = 1+b_0 t + b_1 t^2 + \cdots + b_d t^{d+1}$, where $b_k$ are the 
$k$'th Betti numbers. We have seen that the heat flow combined with the 
McKean-Singer symmetry \cite{knillmckeansinger} morphs the $f$-function 
to the $b$-function. But the equivalence of $f_G(t)$ and $b_G(t)$ only 
holds for $t=-1$ as this is based on super trace identities ${\rm str}(L^n)=0$. 

\paragraph{}
There is a recent theorem about Euler characteristic, the {\bf energy theorem} 
which tells that for an arbitrary finite abstract simplicial complex $G$ (like for example
the Whitney complex of a finite simple graph), the connection matrix $L$ (defined by 
$L(x,y) = 1$ if $x$ and $y$ intersect and $L(x,y)=0$ if they do not intersect), is 
unimodular so that the inverse matrix $g(x,y) = L^{-1}(x,y)$ is integer valued. The 
{\bf energy theorem} tells that the {\bf total potential energy} 
$\sum_{x,y}  g(x,y)$ is equal to the Euler characteristic $\chi(G)$
\cite{Helmholtz}. One can also 
{\bf ``hear the Euler characteristic"} \cite{HearingEulerCharacteristic}
because $\chi(G)$ is the number of positive eigenvalues of $L$ minus 
the number of negative eigenvalues of $L$. 

\paragraph{}
In a future article we will show that we can parametrize
these results to matrices $L$ in which $L(x,y)$ are parametrized by a parameter $t$. 
This can be done in various ways but we still debate which is the most elegant one. 
It currently appears possible that both $L$ and $g$ to have polynomial entries.

\section{Code}

\paragraph{}
The following Mathematica code illustrates the theorem. It computes
the $f$-vector recursively using Poincar\'e-Hopf. The code can be grabbed
from the ArXiv version of the paper. 

\begin{small}
\lstset{language=Mathematica} \lstset{frameround=fttt}
\begin{lstlisting}[frame=single]
UnitSphere[s_,a_]:=Module[{b},b=NeighborhoodGraph[s,a];
  If[Length[VertexList[b]]<2,Graph[{}],VertexDelete[b,a]]];
VertexFunction[s_]:=Table[2*Random[]-1,{Length[VertexList[s]]}];
ErdoesRenyi[M_,p_]:=Module[{q={},e,a,V=Range[M]},
  e=EdgeRules[CompleteGraph[M]];
  Do[If[Random[]<p,q=Append[q,e[[j]]]],{j,Length[e]}]; 
  UndirectedGraph[Graph[V,q]]];
index[g_,s_,a_,t_]:=Module[{v=VertexList[s],u,V,S={}},
   u=UnitSphere[s,a]; V=VertexList[u]; P=Position;
   Do[If[(g[[P[v,V[[k]]][[1,1]]]]-g[[P[v,a][[1,1]]]])<0,
      S = Append[S, V[[k]]]],{k,Length[V]}];
   If[Length[S]==0,1,FFunction[Subgraph[s,S],t]]];
indices[g_,s_,t_]:=Module[{v=VertexList[s], n}, n=Length[v];
   If[n == 0,{},Table[index[g,s,v[[k]],t],{k,n}]]];
FFunction[s_,t_]:=Simplify[1
  +t*Total[indices[VertexFunction[s],s,t]]];              
s0 = ErdoesRenyi[40, 0.5]; A=Timing[FFunction[s0,t]]
\end{lstlisting}
\end{small}

\paragraph{}
In the following code compute the f-matrix and multivariabe $f$-functions
also using Poincar\'e-Hopf. We also give the code to compute 
the intersection generating function $f_{G,H}(t,s)$ 
directly. It satisfies $f_{G,H}(-1,-1)=\omega(G,H)$ 
leading to the {\bf Wu characteristic} $\omega(G) = \omega(G,G)$. 

\paragraph{}
In the given version, we use the standard Wu characteristic
to compute the indices. Using the recursive version 
is slower. We see already in the demo that the new version 
is slower than just computing the functions $f_{G,H}(t,s)$
directly. At the moment, we still prefer to compute the Wu-intersection
numbers directly. 

\begin{small}
\lstset{language=Mathematica} \lstset{frameround=fttt}
\begin{lstlisting}[frame=single]
CliqueNumber[s_]:=Length[First[FindClique[s]]];
ListCliques[s_,k_]:=Module[{n,t,m,u,r,V,W,U,l={},L},L=Length;
  VL=VertexList;EL=EdgeList;V=VL[s];W=EL[s]; m=L[W]; n=L[V];
  r=Subsets[V,{k,k}];U=Table[{W[[j,1]],W[[j,2]]},{j,L[W]}];
  If[k==1,l=V,If[k==2,l=U,Do[t=Subgraph[s,r[[j]]];
  If[L[EL[t]]==k(k-1)/2,l=Append[l,VL[t]]],{j,L[r]}]]];l];
Whitney[s_]:=Module[{F,a,u,v,d,V,LC,L=Length},V=VertexList[s];
  d=If[L[V]==0,-1,CliqueNumber[s]];LC=ListCliques;
  If[d>=0,a[x_]:=Table[{x[[k]]},{k,L[x]}];
  F[t_,l_]:=If[l==1,a[LC[t,1]],If[l==0,{},LC[t,l]]];
  u=Delete[Union[Table[F[s,l],{l,0,d}]],1]; v={};
  Do[Do[v=Append[v,u[[m,l]]],{l,L[u[[m]]]}],{m,L[u]}],v={}];v];
OldWu[s1_,s2_]:=Module[{c1=Whitney[s1],c2=Whitney[s2],v=0},
  Do[Do[If[Length[Intersection[c1[[k]],c2[[l]]]]>0,
   v+=T^Length[c1[[k]]]*S^Length[c2[[l]]]],
   {k,Length[c1]}],{l,Length[c2]}];v];
\end{lstlisting}
\end{small}

\pagebreak

\paragraph{}
Here is sample code for the new part. 

\begin{small}
\lstset{language=Mathematica} \lstset{frameround=fttt}
\begin{lstlisting}[frame=single]
CompleteQ[s_]:=Binomial[Length[VertexList[s]],2]
   ==Length[EdgeList[s]];
UnitSphere[s_,a_]:=Module[{b},b=NeighborhoodGraph[s,a];
  If[Length[VertexList[b]]<2,Graph[{}],VertexDelete[b,a]]];
ErdoesRenyi[M_,p_]:=Module[{q={},e,a,V=Range[M]},
  e=EdgeRules[CompleteGraph[M]];
  Do[If[Random[]<p,q=Append[q,e[[j]]]],{j,Length[e]}];
  UndirectedGraph[Graph[V,q]]];
VertexFunction[s_]:=Table[2*Random[]-1,
  {Length[VertexList[s]]}];
WuIndex[f1_,f2_,s1_,s2_,a_, b_]:=Module[
  {vl,sp,sq,v,w,sa={},sb={},ba,bb,
  P=Position,Sg=Subgraph,A=Append,L=Length,V=VertexList},
  p[t_, u_] := P[t, u][[1,1]]; vl1=V[s1]; vl2=V[s2]; 
  sp=UnitSphere[s1,a];v=V[sp];sq=UnitSphere[s2,b];w=V[sq];
  Do[If[f1[[p[vl1,v[[k]]]]]<f1[[p[vl1,a]]],
    sa=A[sa,v[[k]]]],{k,L[v]}];
  Do[If[f2[[p[vl2,w[[k]]]]]<f2[[p[vl2,b]]],
    sb=A[sb,w[[k]]]],{k,L[w]}];
  ba = A[sa, a]; bb = A[sb, b];
  OldWu[Sg[s1,ba],Sg[s2,bb]]-OldWu[Sg[s1,sa],Sg[s2,bb]]-
  OldWu[Sg[s1,ba],Sg[s2,sb]]+OldWu[Sg[s1,sa],Sg[s2,sb]]];
WuIndices[f1_,f2_, s1_,s2_]:=Module[
  {V1=VertexList[s1],V2=VertexList[s2],L=Length}, 
  Table[WuIndex[f1,f2,s1,s2,V1[[k]],V2[[l]]],
  {k,L[V1]},{l,L[V2]}]];
Wu[s1_,s2_]:=Module[{f1=VertexFunction[s1],f2=VertexFunction[s2]},
   Total[Flatten[WuIndices[f1,f2,s1,s2]]]];
s=ErdoesRenyi[12,0.5]; 
Print[Timing[Wu[s,s]]];
Print[Timing[OldWu[s,s]]]; 
\end{lstlisting}
\end{small}

\bibliographystyle{plain}

\begin{thebibliography}{10}

\bibitem{DuKo}
D-Z. Du and K-I. Ko.
\newblock {\em Theory of Computational Complexity}.
\newblock Wiley Science, 2000.

\bibitem{erdoesrenyi59}
P.~Erd{\"{o}}s and A.~R{\'e}nyi.
\newblock On random graphs. {I}.
\newblock {\em Publ. Math. Debrecen}, 6:290--297, 1959.

\bibitem{hopf26}
H.~Hopf.
\newblock {\"Uber die Curvatura integra geschlossener Hyperflaechen}.
\newblock {\em Mathematische Annalen}, 95:340--367, 1926.

\bibitem{cherngaussbonnet}
O.~Knill.
\newblock A graph theoretical {Gauss-Bonnet-Chern} theorem.
\newblock {\\}http://arxiv.org/abs/1111.5395, 2011.

\bibitem{poincarehopf}
O.~Knill.
\newblock A graph theoretical {Poincar\'e-Hopf} theorem.
\newblock {\\} http://arxiv.org/abs/1201.1162, 2012.

\bibitem{indexformula}
O.~Knill.
\newblock An index formula for simple graphs \hfill.
\newblock {\\}http://arxiv.org/abs/1205.0306, 2012.

\bibitem{knillmckeansinger}
O.~Knill.
\newblock {The McKean-Singer Formula in Graph Theory}.
\newblock {\\}http://arxiv.org/abs/1301.1408, 2012.

\bibitem{brouwergraph}
O.~Knill.
\newblock A {Brouwer} fixed point theorem for graph endomorphisms.
\newblock {\em Fixed Point Theory and Appl.}, 85, 2013.

\bibitem{KnillSard}
O.~Knill.
\newblock A {S}ard theorem for graph theory.
\newblock {{\\}http://arxiv.org/abs/1508.05657}, 2015.

\bibitem{valuation}
O.~Knill.
\newblock Gauss-{B}onnet for multi-linear valuations.
\newblock {\\}http://arxiv.org/abs/1601.04533, 2016.

\bibitem{Helmholtz}
O.~Knill.
\newblock On {H}elmholtz free energy for finite abstract simplicial complexes.
\newblock {\\}https://arxiv.org/abs/1703.06549, 2017.

\bibitem{HearingEulerCharacteristic}
O.~Knill.
\newblock One can hear the {E}uler characteristic of a simplicial complex.
\newblock {\\}https://arxiv.org/abs/1711.09527, 2017.

\bibitem{CohomologyWuCharacteristic}
O.~Knill.
\newblock The cohomology for {W}u characteristics.
\newblock {\\}https://arxiv.org/abs/1803.1803.067884, 2018.

\bibitem{dehnsommervillegaussbonnet}
O.~Knill.
\newblock Dehn-sommerville from {G}auss-{B}onnet.
\newblock {\\}https://arxiv.org/abs/1905.04831, 2019.

\bibitem{Levitt1992}
N.~Levitt.
\newblock The {E}uler characteristic is the unique locally determined numerical
  homotopy invariant of finite complexes.
\newblock {\em Discrete Comput. Geom.}, 7:59--67, 1992.

\bibitem{poincare85}
H.~{Poincar\'e}.
\newblock Sur les courbes definies par les equation differentielle {III}.
\newblock {\em Journal de Mathematique pures et appliqu\'ees}, pages 167--244,
  1885.

\bibitem{Spivak1999}
M.~Spivak.
\newblock {\em A comprehensive Introduction to Differential Geometry I-V}.
\newblock Publish or Perish, Inc, Berkeley, third edition, 1999.

\end{thebibliography}

\end{document}